\newtheorem{theorem}{Theorem}[section]
\newtheorem{lemma}[theorem]{Lemma}
\newtheorem{proposition}[theorem]{Proposition}
\newcounter{maintheorem}
\newtheorem{mainth}[maintheorem]{Theorem}
\theoremstyle{definition}
\newtheorem*{remark*}{Remark}
\numberwithin{equation}{section}
\newcommand{\vertiii}[1]{{\left\vert\kern-0.25ex\left\vert\kern-0.25ex\left\vert #1 
    \right\vert\kern-0.25ex\right\vert\kern-0.25ex\right\vert}}
\newcounter{smallromans}
\newcounter{smallromansdash}
\newcounter{bigromans}
\newcommand{\Htheta}{\operatorname{H}(\theta)}
\newcommand{\s}{\subseteq}
\title{Discrete subgroups of normed spaces are free}
\author[T.~Kania]{Tomasz Kania}
\address{Institute of Mathematics, Czech Academy of Sciences, \v{Z}itn\'{a} 25, 115~67 Prague 1, Czech Republic}
\email{tomasz.marcin.kania@gmail.com}
\author[Z. Kostana]{Ziemowit Kostana}
\address{Institute of Mathematics, Czech Academy of Sciences, \v{Z}itn\'{a} 25, 115~67 Prague 1, Czech Republic}
\email{kostana@math.cas.cz}
\thanks{Institute of Mathematics, Czech Academy of Sciences; RVO: 67985840. Research of Z. Kostana was supported by the GA\v{C}R project EXPRO 20-31529X}
\date{\today}
\subjclass[2010]{46B20, 20K27 (primary), 46B26, 20K99 (secondary).}
\keywords{free Ablian group, discrete subgroup, normed space, elementary submodel, Singular Compactness Theorem}
\begin{document}
\begin{abstract}
    Ancel, Dobrowolski, and Grabowski (\emph{Studia Math., 1994)} proved that every countable discrete subgroup of the additive group of a normed space is free Abelian, hence isomorphic to the direct sum of a certain number of copies of the additive group of the integers. In the present paper, we take a set-theoretic approach based on the theory of elementary submodels and the Singular Compactness Theorem to remove the cardinality constraint from their result and prove that indeed every discrete subgroup of the additive group of a normed space is free Abelian. 
\end{abstract}
\maketitle

\section{Introduction} 
It is a familiar fact from Euclidean geometry that every discrete subgroup of $\mathbb R^n$ is free Abelian of rank at most $n$ with a freely generating set of linearly independent vectors in $\mathbb R^n$. (An Abelian group is \emph{free} whenever it is a free object in the category of Abelian groups (\emph{i.e.}, $\mathbb Z$-modules); being free $\mathbb Z$-modules, free Abelian groups have \emph{bases}, that is freely generating sets, and as such they are necessarily isomorphic to direct sums $\mathbb Z^{(\Gamma)}$ of the group of integers for some index set $\Gamma$; the cardinality of any basis of a free Abelian group is called the \emph{rank}.) Just as vector spaces, sometimes free Abelian groups do come equipped with canonical or otherwise evident bases. Nonetheless, as for vector spaces, sometimes this is not the case, just to mention the groups $B(\Gamma, \mathbb Z)$ comprising bounded integer-valued functions on a set $\Gamma$ that are free Abelian (this is the Specker--N\"obeling theorem; \cite{nobeling1968verallgemeinerung, specker1950}). The reason behind this obstacle can be formally delineated from the point of view of (generalised) computability theory (\cite{Greenberg2018finding}).\smallskip

It is claimed without proof on p.~58 in \cite{banaszczyk2006additive} that the discrete subgroup of the Banach space $L_\infty[0,1]$ comprising integer-valued functions is not free. Unfortunately, this claim is \emph{not} correct in the light of Bergman's result (\cite{bergman1972boolean}; see also \cite{hill1973additive} for an alternative proof) asserting that the additive group of a commutative ring generated by idempotents is a direct sum of cyclic groups, so free Abelian if torsion-free.\smallskip

Since being free is intimately related to the notion of a basis, one can observe that free Abelian groups are only weakly second-order characterisable, hence not directly amenable to absoluteness arguments (see \cite[Example 7.3.4]{vaananen2023}). (A model of a finite vocabulary is weakly second-order characterisable if it satisfies a second-order sentence that has at most one model in each cardinality, up to isomorphism.) Indeed, Shelah \cite{shelah1974free} famously proved that the problem of whether all Whitehead groups are free is independent of ZFC (we refer to the said paper for further details). \smallskip

The Axiom of Choice implies that subgroups of free Abelian groups are free \cite[Appendix 2 \S2, p.~880]{lang2022algebra}, however, there exist non-free Abelian groups whose all countable groups are free; the Baer--Specker group, that is the product of countably many copies of the group of integers, is a~notable example. This gives rise to the notion of a $\lambda$-\emph{free Abelian group}, that is a group whose all subgroups of cardinality smaller than $\lambda$ are free Abelian. Eklof \cite{eklof1975onthe} proved that for every regular cardinal $\lambda$ there exists a $\lambda$-free Abelian group that is not $\lambda^+$-free. This result is sharp as we have (the special case of) Shelah's Singular Compactness Theorem (\cite[Ch.~IV Thm.~3.5]{eklof2002}).
\begin{theorem}\label{singularcomp}
    Let $\kappa$ be a singular cardinal. If $A$ is a $\kappa$-free Abelian group of size $\kappa$, then $A$ is free Abelian.
\end{theorem}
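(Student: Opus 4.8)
The plan is to deduce the theorem from the construction of a short, carefully chosen filtration of $A$. Since $\kappa$ is singular, set $\mu=\operatorname{cf}(\kappa)<\kappa$ and fix a strictly increasing sequence $\langle\kappa_i:i<\mu\rangle$ of regular cardinals, cofinal in $\kappa$, with $\mu<\kappa_0$. I would first observe that it is enough to build a continuous increasing chain $\langle A_i:i<\mu\rangle$ of subgroups of $A$ with $\bigcup_{i<\mu}A_i=A$, $|A_i|\le\kappa_i$, such that $A_0$ is free and each successive quotient $A_{i+1}/A_i$ is free. Indeed, from such a chain one manufactures a basis of $A$ by recursion on $i$: having a basis $B_i$ of $A_i$, lift a basis of the free (hence projective) group $A_{i+1}/A_i$ into $A_{i+1}$ and adjoin it to $B_i$ to get a basis $B_{i+1}\supseteq B_i$ of $A_{i+1}$, using that $0\to A_i\to A_{i+1}\to A_{i+1}/A_i\to 0$ splits; and at limit $i$ put $B_i=\bigcup_{j<i}B_j$, which is a basis of $A_i=\bigcup_{j<i}A_j$ since any finite linear dependence is already witnessed in some $B_j$. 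Then $\bigcup_{i<\mu}B_i$ is a basis of $A$. (It is essential to engineer the freeness of the successive quotients: a continuous union of free groups need not be free, witness $\mathbb Q=\bigcup_n\tfrac{1}{n!}\mathbb Z$, and this is where all the work goes.)

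The construction of the chain rests on isolating a local strengthening of freeness that is inherited by small unions. Roughly, I would call a subgroup $N\le A$ of size $\le\lambda$ \emph{$\lambda$-good} if for every subset $X\subseteq A$ of size $\le\lambda$ there is a $\lambda$-good subgroup $N'\supseteq N\cup X$ of size $\le\lambda$ with $N'/N$ free — a condition made precise by a rank recursion, since it refers to itself, and the exact bookkeeping of the parameter $\lambda$ is delicate. Two lemmas are then needed. The first is of Löwenheim–Skolem type: since $A$ is $\kappa$-free and $\kappa$, being singular, is a limit cardinal, $A$ is $\lambda^{+}$-free (indeed $\lambda^{++}$-free) for every $\lambda<\kappa$, and this extra room is enough to run the recursion showing that every subgroup of size $\le\lambda$ is contained in a $\lambda$-good one of size $\le\lambda$; phrasing this step through elementary submodels of a suitable $\operatorname{H}(\theta)$ is natural. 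The second lemma is a coherence statement: an increasing union of good subgroups is again good. Granting both, enumerate $A=\{a_\alpha:\alpha<\kappa\}$ and build $\langle A_i\rangle$ by recursion so that $A_i$ is good (at the appropriate level), $|A_i|\le\kappa_i$, and $\{a_\alpha:\alpha<\kappa_i\}\subseteq A_i$: at successor steps apply the first lemma to $A_i\cup\{a_\alpha:\alpha<\kappa_{i+1}\}$, which returns $A_{i+1}$ with $A_{i+1}/A_i$ free precisely because $A_i$ was chosen good; at limit steps take unions and invoke the second lemma.

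The hard part is making the definition of goodness and the two lemmas dovetail with the cardinal bookkeeping at the successor and limit stages — in particular, arranging that the level of goodness available at stage $i$ is exactly the one needed to force the next quotient $A_{i+1}/A_i$ to be free, while still being recoverable at limits of length $<\mu$. Closing up this recursion is the combinatorial heart of Shelah's theorem; the remaining ingredients — splitting of extensions by free groups, lifting of bases, and the cardinal arithmetic in the Löwenheim–Skolem step — are routine. I therefore would not reproduce the argument here and instead defer to \cite[Ch.~IV, Thm.~3.5]{eklof2002}.
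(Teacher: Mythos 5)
The paper does not prove this theorem either: it simply quotes Shelah's Singular Compactness Theorem from \cite[Ch.~IV, Thm.~3.5]{eklof2002}, which is exactly where your proposal ends up as well, so your treatment matches the paper's. Your accompanying outline of the standard argument (a continuous filtration of length $\operatorname{cf}(\kappa)$ with free successive quotients, obtained via a ``goodness'' notion exploiting that $\kappa$-freeness at a limit cardinal gives $\lambda^{++}$-freeness for all $\lambda<\kappa$) is an accurate description of the proof in that reference, though, as you say yourself, it is a sketch whose combinatorial core is deferred rather than carried out.
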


Discrete subgroups of normed spaces are closed. Countable discrete subgroups of (the additive groups of) normed spaces are free Abelian (\cite[Theorem 1.1]{ancel1994closed}); the proof presented by Acel, Dobrowolski, and Grabowski is inductive and would break down at the $\omega$\textsuperscript{th} step should one wish to continue the process transfinitely. Indeed, it is a natural question of whether discrete subgroups of normed spaces are free Abelian, hence a potential proof based on transfinite induction seems appealing.  The first-named author is indebted to Piotr Niemiec for making us aware of this problem. Quite recently, the same question has been asked by van Gent (\cite[Question 3.14]{van2021indecomposable}) in a more restricted setting of Hilbert spaces. The main result is the affirmative answer (in ZFC) to the said question.

\begin{mainth}\label{mainthm}
    Discrete subgroups of normed spaces are free Abelian.
\end{mainth}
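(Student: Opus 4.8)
The plan is to argue by transfinite induction on the cardinality $\kappa=|G|$ of a discrete subgroup $G$ of a normed space $X$. The base case $\kappa\le\aleph_0$ is exactly the theorem of Ancel, Dobrowolski and Grabowski (\cite[Theorem~1.1]{ancel1994closed}). For the inductive step the first observation is that \emph{every subgroup of a discrete subgroup of $X$ is again a discrete subgroup of $X$}, so the inductive hypothesis applies to all subgroups of $G$ of cardinality $<\kappa$; in particular $G$ is $\kappa$-free. If $\kappa$ is singular we are then done immediately by the Singular Compactness Theorem (Theorem~\ref{singularcomp}). Thus the heart of the matter is the case where $\kappa$ is \emph{regular and uncountable}.

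In that case fix a sufficiently large regular cardinal $\theta$ and build a continuous $\in$-increasing chain $(M_\alpha)_{\alpha<\kappa}$ of elementary submodels of $\big(\operatorname{H}(\theta),\in\big)$ with $X,G,\|\cdot\|$ and a fixed well-ordering of $G$ in $M_0$, with $M_\alpha\in M_{\alpha+1}$, $|M_\alpha|<\kappa$, $M_\alpha\cap\kappa\in\kappa$, and $G\subseteq\bigcup_{\alpha<\kappa}M_\alpha$ (the last condition secured by enumerating $G$ along $\kappa$ and catching up). Put $G_\alpha:=G\cap M_\alpha$. Then $(G_\alpha)_{\alpha<\kappa}$ is a $\kappa$-filtration of $G$ whose terms are free by the inductive hypothesis. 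A routine elementarity argument, combined with the classical structure theory of discrete subgroups of finite-dimensional spaces, shows each $G_\alpha$ is \emph{span-closed}: if $g\in G$ lies in the real linear span of finitely many $h_1,\dots,h_n\in G_\alpha$, then the finite-dimensional subspace $Y=\operatorname{span}_{\mathbb R}(h_1,\dots,h_n)$ belongs to $M_\alpha$, the group $G\cap Y$ is a discrete subgroup of a finite-dimensional space, hence free of finite rank and in particular countable, so $G\cap Y\in M_\alpha$ forces $G\cap Y\subseteq M_\alpha$ and $g\in G_\alpha$. Consequently each $G_\alpha$ is pure in $G$, and every quotient $G_\beta/G_\alpha$ with $\alpha<\beta$ is torsion-free.

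Since $G$ is $\kappa$-free of cardinality $\kappa$ and $(G_\alpha)$ is a $\kappa$-filtration by free subgroups, by the standard criterion for freeness of $\kappa$-filtered groups (for $\kappa$ regular uncountable) it suffices to produce a club $C\subseteq\kappa$ with the property that $G_\beta/G_\alpha$ is free Abelian for all $\alpha<\beta$ in $C$: for then each one-step inclusion along $C$ has free — hence projective — cokernel, so it splits, and choosing complements and using continuity at limit points of $C$ exhibits $G$ as a direct sum of free groups. So everything is reduced to the claim that, for club-many $\alpha<\kappa$, the quotients $G_\beta/G_\alpha$ are free.

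This is the crux, and it is precisely here that discreteness of $G$ — as opposed to mere purity of the $G_\alpha$ — must be exploited; I expect to prove it hand in hand with a strengthened inductive hypothesis concerning quotients. One first refines the construction so that, for a club of $\alpha$, one has $G_\alpha=G\cap\overline{\operatorname{span}}_{\mathbb R}(G_\alpha)$: this is possible because $\overline{\operatorname{span}}_{\mathbb R}(G_\alpha)$ is separable of density $<\kappa$ and so meets $G$ in a set of size $<\kappa$, which can be absorbed into later models, after which a reflection argument makes the closure condition hold on a club. For such $\alpha$ the quotient $G_\beta/G_\alpha$ embeds into a subquotient of $X$ taken over the closed span of $G_\alpha$, and one argues — using that $G$ is discrete, so that it cannot accumulate on the closed span of a sub-discrete-subgroup in the way that a non-free (e.g.\ infinitely $p$-divisible) element of the quotient would force — that $G_\beta/G_\alpha$ is free Abelian; in the countable case this takes the form of a Pontryagin-type criterion (every finite-rank subgroup of the quotient turns out to be finitely generated), while in general one feeds back the inductive hypothesis on the groups $G_\beta$, of size $<\kappa$, and applies Singular Compactness at any intermediate singular cardinalities. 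Isolating the exact geometric consequence of discreteness that rules out every non-free quotient is the main obstacle; once it is in hand, the reduction above yields that $G$ is free Abelian.
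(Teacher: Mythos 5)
Your overall architecture matches the paper's: induction on $|G|$, the Ancel--Dobrowolski--Grabowski theorem for the countable case, Singular Compactness for singular $\kappa$, and for regular uncountable $\kappa$ a continuous chain of elementary submodels giving a filtration $G_\alpha=G\cap M_\alpha$ together with a club criterion for freeness. But there is a genuine gap at exactly the point you flag yourself: you never prove that the quotients along the filtration are free (or $\kappa$-free), and you explicitly say that ``isolating the exact geometric consequence of discreteness'' needed for this is ``the main obstacle.'' That obstacle is the entire content of the paper's key lemma, and without it the proof does not close. The paper's resolution is short and decisive: for $A,X\in M\prec \operatorname{H}(\theta)$, the group $A/(A\cap M)$ is itself a \emph{discrete subgroup of the quotient normed space} $X/\overline{X\cap M}$. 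The proof is a $\delta/3$-argument: if $\delta$ is below the discreteness gap of $A$ and $\|a+\overline{X\cap M}\|<\delta/3$ is witnessed by some $x\in X\cap M$, then by elementarity $M$ contains some $a'\in A$ with $\|x-a'\|<\delta/3$, whence $\|a-a'\|\leqslant 2\delta/3<\delta$ and $a=a'\in M$. This single observation makes the induction self-propagating: the quotient is again a discrete subgroup of a normed space, so the inductive hypothesis applies to all its subgroups of size $<\kappa$, the quotient is $\kappa$-free, and Lemma~\ref{clublemma} finishes.

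The alternative routes you sketch do not substitute for this. Quotienting by $\overline{\operatorname{span}}_{\mathbb R}(G_\alpha)$ rather than by $\overline{X\cap M_\alpha}$ loses the elementarity leverage: an element of $G$ can be close to the closed span of $G_\alpha$ without being close to any element of $G_\alpha$, so discreteness of the image is not clear, and your appeal to a ``Pontryagin-type criterion'' for the quotient is not backed by any argument controlling its finite-rank subgroups. The span-closure/purity observations are correct but, as you note, purity alone cannot rule out non-free quotients; purity only gives torsion-freeness. In short, the skeleton is right and coincides with the paper's, but the one nontrivial idea --- that discreteness passes to the quotient modulo $\overline{X\cap M}$ --- is missing, and it is the load-bearing step.
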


\section{Preliminaries}

\subsection*{Elementary submodels \& clubs} 
Let $\kappa$ be a regular cardinal. A set $D \s \kappa$ is \emph{closed unbounded} (or a~\emph{club}), whenever it is unbounded and closed in the order topology. We write $\operatorname{acc} D$ for the set of \emph{accumulation points} of $D$\footnote{This definition is, of course, meaningful for any subset of $\kappa$ not only a club.}, \emph{i.e.},
\[
    \operatorname{acc} D:= \{\beta<\kappa \mid \forall \, \alpha<\beta \quad D \cap (\beta \setminus \alpha) \neq \emptyset\}.
\]
If $D$ is a club, then $\operatorname{acc} D \s D$.

Using chains of elementary submodels is by now a well-established tool in many parts of mathematical logic. We recall here basic definitions, and refer the reader to \cite{dow1988introduction} for more elaborated introduction, together with some applications.

Let $\theta$ be a cardinal number. The structure $\Htheta$ is the set of all \emph{hereditarily $<\kappa$-sets}, that is, sets whose {transitive closure} (the smallest, with respect to inclusion, transitive set that contains a given set) has cardinality less than $\theta$. Whenever $\theta$ is uncountable and regular, $\Htheta$ is a model of all the axioms of $\operatorname{ZFC}$, possibly except the Axiom of Power Set~\cite[Ch.~IV, Thm. 6.5]{kunen}. We write $M \prec \Htheta$ when $M = (M, \in)$ is an elementary substructure of $(\Htheta, \in)$. It is often useful to work with specific sequences of such elementary submodels. Therefore, for a cardinal $\kappa$, we say that a~family $\{ M_\alpha \mid \alpha < \kappa \}$ is a \emph{continuous chain of elementary submodels} of $\Htheta$, whenever
\begin{enumerate}
    \item $\forall \alpha < \kappa \quad M_\alpha \prec \Htheta$,
    \item $\forall \alpha < \kappa \quad M_\alpha \in M_{\alpha+1}$,
    \item $\forall \beta \in \operatorname{acc} \kappa \quad M_\beta = \bigcup_{\alpha < \beta} M_\alpha$.
\end{enumerate}

The next proposition is essentially an instance of a well-known fact, that we can always produce continuous chains of elementary submodels by repeatedly taking closures under Skolem functions.

\begin{proposition}\label{contchains}
    Assume $\lambda<\theta$ are uncountable, regular cardinals, and $Z \in \Htheta$ is an arbitrary set. Then there exists a continuous chain of elementary submodels of $\Htheta$, $\{M_\alpha \mid \alpha< \lambda\}$, such that:
    \begin{enumerate}
        \item $Z \in M_0$,
        \item $\forall \alpha<\lambda \quad \alpha \s M_\alpha$,
        \item $\forall \alpha<\lambda \quad |M_\alpha|=|\alpha|+\aleph_0$.
    \end{enumerate}
    Moreover, the set $\{M_\alpha \cap \lambda \mid \alpha<\lambda\}$ is a club subset of $\lambda$.
\end{proposition}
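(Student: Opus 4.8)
\emph{Proof idea.} The plan is to build the chain $\langle M_\alpha\mid\alpha<\lambda\rangle$ by transfinite recursion, closing under Skolem functions at successor stages and taking unions at limits, and then to extract the asserted club from the traces $M_\alpha\cap\lambda$.

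First fix a well-ordering of $\Htheta$, and for $X\s\Htheta$ let $\operatorname{Sk}(X)$ denote the Skolem hull of $X$ in the structure $(\Htheta,\in)$ with respect to the associated (countably many, definable) Skolem functions; then $\operatorname{Sk}(X)\prec\Htheta$, $|\operatorname{Sk}(X)|\le|X|+\aleph_0$, and, because $\theta$ is regular and uncountable, $\operatorname{Sk}(X)\in\Htheta$ whenever $|X|<\theta$. Now recurse: put $M_0:=\operatorname{Sk}(\{Z\})$; having defined $M_\alpha$, put $M_{\alpha+1}:=\operatorname{Sk}\bigl(M_\alpha\cup\{\alpha,M_\alpha\}\bigr)$, which is meaningful since $M_\alpha\in\Htheta$; and for $\alpha\in\operatorname{acc}\lambda$ put $M_\alpha:=\bigcup_{\beta<\alpha}M_\beta$, which is an elementary submodel of $\Htheta$ by the Tarski--Vaught test applied along the chain. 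It is then immediate that $\langle M_\alpha\rangle$ is a continuous chain of elementary submodels satisfying conditions $(1)$ and $(2)$. Condition $(3)$ I would prove by induction on $\alpha$: the Löwenheim--Skolem bound gives $|M_\alpha|\le|\alpha|+\aleph_0$, the inclusion $\alpha\s M_\alpha$ (together with $M_\alpha$ being infinite) gives the reverse inequality, and at a limit stage one uses $\bigl|\bigcup_{\beta<\alpha}M_\beta\bigr|\le|\alpha|\cdot\sup_{\beta<\alpha}(|\beta|+\aleph_0)=|\alpha|+\aleph_0$.

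For the ``moreover'' I would work with $C:=\{\alpha<\lambda:M_\alpha\cap\lambda=\alpha\}$: for $\alpha\in C$ the trace $M_\alpha\cap\lambda$ is exactly the ordinal $\alpha$, and $\{M_\alpha\cap\lambda\mid\alpha\in C\}=C$, so it is enough to show that $C$ is a club in $\lambda$ (for $\lambda=\omega_1$ every $M_\alpha\cap\omega_1$ is automatically an ordinal, so there the traces themselves form the club). Closedness of $C$ is immediate from continuity of the chain: if $\alpha$ is a limit of elements of $C$, then
\[
  M_\alpha\cap\lambda=\bigcup_{\beta\in C\cap\alpha}(M_\beta\cap\lambda)=\sup(C\cap\alpha)=\alpha.
\]
For unboundedness, fix $\alpha_0<\lambda$; since $\lambda$ is regular and $|M_\beta\cap\lambda|\le|M_\beta|<\lambda$ for every $\beta<\lambda$, we have $\sup(M_\beta\cap\lambda)<\lambda$, so we may choose ordinals $\alpha_0<\alpha_1<\alpha_2<\cdots$ with $\alpha_{n+1}>\sup(M_{\alpha_n}\cap\lambda)$; then $\alpha:=\sup_n\alpha_n<\lambda$ satisfies $\alpha\s M_\alpha$ and
\[
  M_\alpha\cap\lambda=\bigcup_n(M_{\alpha_n}\cap\lambda)\s\sup_n\alpha_{n+1}=\alpha,
\]
whence $\alpha\in C$ and $\alpha\ge\alpha_0$.

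The recursion and the bookkeeping for $(1)$--$(3)$ are routine; I expect the only delicate point to be the club statement, and specifically the fact that $M_\alpha\cap\lambda$ need \emph{not} be an ordinal for every $\alpha$ once $\lambda>\omega_1$ (all definable infinite cardinals below $\lambda$ lie in each $M_\alpha$, so a small $M_\alpha$ cannot be transitive below $\lambda$), so that what the construction genuinely delivers is the club $C$ of those indices on which the trace equals the index.
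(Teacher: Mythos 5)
Your construction is exactly the paper's (Skolem hulls at successors, unions at limits; the paper leaves the verification as ``standard to check''), and your verifications of (1)--(3), of closedness, and of unboundedness are all correct. Your further observation that for $\lambda>\omega_1$ the trace $M_\alpha\cap\lambda$ need not be an ordinal --- so that the ``moreover'' clause should properly be read as asserting that $C=\{\alpha<\lambda \mid M_\alpha\cap\lambda=\alpha\}$ is a club --- is accurate and identifies an imprecision the paper glosses over; your $C$ is the right repair and is all that is needed where the proposition is invoked in the proof of the main theorem.
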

\begin{proof}
    We define the chain $\{M_\alpha \mid \alpha < \lambda\}$ recursively. 
    \begin{itemize}
        \item We declare $M_0$ to be the closure of $\{Z\}$ under Skolem functions of $\Htheta$.
        \item If $M_\alpha$ is defined, we declare $M_{\alpha+1}$ to be closure of $M_\alpha \cup \{M_\alpha\}$ under Skolem functions of $\Htheta$. 
        \item For a limit ordinal $\beta<\lambda$, we declare $M_\beta:= \bigcup_{\alpha<\beta}M_\alpha$. 
    \end{itemize} It is standard to check that $\{M_\alpha \mid \alpha < \lambda\}$ is as required.
\end{proof}

\subsection*{Abelian groups} All groups under consideration are assumed to be Abelian. Terminology related to groups is standard; we refer to \cite{fuchs1970} for all unexplained terms. The following folklore fact is well-known; we provide proof for the sake of completeness.

\begin{proposition}\label{fact}
    Suppose that $A$ is a (free) subgroup of a free Abelian group $B$. Then $B \slash A$ is free precisely when every basis of $A$ may be extended to a basis of $B$.
\end{proposition}
\begin{proof}
    If $B \slash A$ is free, then by \cite[Thm. 14.4]{fuchs1970}, $A$ is a direct summand of $B$, and the result follows. 

    In the other direction, suppose $E$ is a basis of $A$, and let $F \supseteq E$ be its extension to a basis of $B$. A moment's thought shows that the set $\{f + A \mid f \in F\setminus E\}$ constitutes a basis of $B\slash A$.
\end{proof}

The following lemma is also standard (see \cite{eklof2002}). For the reader's convenience, we include the proof too.
\begin{lemma}\label{clublemma}
    Suppose that $\kappa$ is an uncountable, regular cardinal and that $A$ is a $\kappa$-free Abelian group of rank $\kappa$. Then the following conditions are equivalent:
    \begin{enumerate}
        \item $A$ is free,
        \item if there is a representation $A={\bigcup_{\alpha<\kappa}A_\alpha}$ as a continuous chain of subgroups, each having size less than $\kappa$, the set
       $
            \{\alpha<\kappa \mid A \slash A_\alpha \text{ is $\kappa$-free}\}
       $
        contains a club subset of $\kappa$.
    \end{enumerate}
\end{lemma}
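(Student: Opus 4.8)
The plan is to prove the two implications separately, with the real content lying in (2)$\Rightarrow$(1).

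For (1)$\Rightarrow$(2): Suppose $A$ is free and fix an arbitrary continuous chain $A=\bigcup_{\alpha<\kappa}A_\alpha$ of subgroups of size $<\kappa$. First I would pick a basis $B$ of $A$ and, using a standard closing-off argument, find a club $C\subseteq\kappa$ such that for every $\alpha\in C$ the set $B_\alpha:=B\cap A_\alpha$ is a basis of $A_\alpha$ and $A_\alpha$ is generated by $B_\alpha$ — i.e. $A_\alpha$ is a direct summand of $A$ "along $B$". (The point is that the function sending a finite subset of $B$ to the finitely many elements of $B$ needed to express its span inside $A$, together with the function witnessing membership in some $A_\alpha$, is eventually absorbed.) For such $\alpha$, $A/A_\alpha$ is free with basis $\{b+A_\alpha : b\in B\setminus B_\alpha\}$ by Proposition \ref{fact}; since subgroups of free groups are free, $A/A_\alpha$ is in fact free, hence certainly $\kappa$-free. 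Thus the displayed set contains the club $C$.

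For (2)$\Rightarrow$(1): Assume (2). Write $A=\bigcup_{\alpha<\kappa}A_\alpha$ as \emph{some} continuous chain of subgroups of size $<\kappa$ (possible since $\operatorname{rank}A=\kappa$ and $\kappa$ is regular); by hypothesis there is a club $C$ such that $A/A_\alpha$ is $\kappa$-free for all $\alpha\in C$. Passing to the subchain indexed by $C$ (using continuity, and that $\operatorname{acc}C\subseteq C$), I may assume without loss of generality that $A/A_\alpha$ is $\kappa$-free for every $\alpha<\kappa$. The goal is to build a basis of $A$ by transfinite recursion: I will construct an increasing continuous chain of bases, i.e. bases $B_\alpha$ of $A_\alpha$ with $B_\alpha\subseteq B_{\alpha+1}$ and $B_\beta=\bigcup_{\alpha<\beta}B_\alpha$ at limits, and then $B:=\bigcup_{\alpha<\kappa}B_\alpha$ is a basis of $A$.

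The successor step is where the hypothesis is used, and it is the main obstacle. Given a basis $B_\alpha$ of $A_\alpha$, I want to extend it to a basis $B_{\alpha+1}$ of $A_{\alpha+1}$; by Proposition \ref{fact} this is possible exactly when $A_{\alpha+1}/A_\alpha$ is free. Now $A_{\alpha+1}/A_\alpha$ is a subgroup of $A/A_\alpha$ of size $<\kappa$, and $A/A_\alpha$ is $\kappa$-free, so $A_{\alpha+1}/A_\alpha$ is indeed free — this is precisely the leverage (2) provides. The limit step is essentially automatic: a union of a continuous chain of bases along a chain of subgroups is a basis of the union, since any linear dependence or failure of spanning would already appear at some bounded stage. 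Finally, at stage $\kappa$, $B=\bigcup_\alpha B_\alpha$ freely generates $A=\bigcup_\alpha A_\alpha$ by the same bounded-support observation, so $A$ is free. The one subtlety to be careful about in writing this up is ensuring the chain of bases can be kept \emph{continuous} at limits while still extending at successors — this is handled by always choosing $B_{\alpha+1}$ to \emph{contain} $B_\alpha$ (legitimate by Proposition \ref{fact}) and taking unions at limits, so that continuity is built in by construction rather than needing to be arranged after the fact.
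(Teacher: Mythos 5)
Your proof is correct and follows essentially the same route as the paper's: the substantive direction (2)$\Rightarrow$(1) is the identical transfinite construction of a continuous increasing chain of bases, extending at successor steps via Proposition~\ref{fact} because $A_{\alpha+1}/A_\alpha$ is a subgroup of size $<\kappa$ of the $\kappa$-free group $A/A_\alpha$, and taking unions at limits. If anything, your (1)$\Rightarrow$(2) is slightly more thorough, since you verify the club condition for an \emph{arbitrary} continuous chain by a closing-off argument, whereas the paper only exhibits one particular filtration (generated by initial segments of a fixed basis) for which the set is all of $\kappa$.
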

\begin{proof}
    Suppose that $A$ is free and let $\{b_\alpha \mid \alpha < \kappa\}$ be a basis of $A$. For $\alpha < \kappa$ set $A_\alpha = \langle \{b_\beta \mid \beta < \alpha\}\rangle$ so each $A_\alpha$ is free and so is $A/A_\alpha$ (hence $\kappa$-free) since $A_\alpha$ is a direct summand in $A$. We may thus take $D = \kappa$, which is certainly a club. 

    For the other direction, fix a representation $A={\bigcup_{\alpha<\kappa}A_\alpha}$, and assume that there exists a club $D \subseteq \{\alpha<\kappa \mid A \slash A_\alpha \text{ is $\kappa$-free}\}$. We define recursively a trasfinite sequence $\{E_\alpha \mid \alpha \in D\}$, satisfying:
    \begin{itemize}
        \item $\forall \, \alpha \in D \quad E_\alpha \text{ is a basis of $A_\alpha$}$,
        \item $\forall\, \{\alpha,\beta\} \in [D]^2 \quad \alpha<\beta \implies E_\alpha = E_\beta \cap A_\alpha$,
        \item $\forall\, \beta \in \operatorname{acc} D \quad E_\beta={\bigcup_{\alpha<\beta}E_\alpha}.$
    \end{itemize}

    The limit stage of the construction is straightforward. In the successor step, assume that $E_\alpha$ is known, and $\beta= \min{D\setminus(\alpha+1)}$. By the choice of $D$, the group $A_\beta \slash A_\alpha$ is free, so $E_\alpha$ can be extended to a basis $E_\beta$ of $A_\beta$. Finally, we observe that ${\bigcup_{\alpha \in D} E_\alpha}$ is a basis of $A$.
\end{proof}


\section{Proof of Theorem~\ref{mainthm}}


The next simple lemma is the key ingredient in the proof of Theorem \ref{mainthm}.

\begin{lemma}\label{mainlemma}
Let $\theta \geqslant |X|$ be an uncountable regular cardinal. Suppose that $A,X \in M \prec \Htheta$. Then $A\slash A\cap M$ is a discrete subgroup of $X \slash \overline{X\cap M}$, in the topology generated by the norm of the quotient space, i.e.,
\[
    \begin{array}{lcl}\| y + \overline{X\cap M}\|_{X \slash \overline{X\cap M}} &= & \inf\{\|y - x\|_X \mid x \in \overline{X\cap M}\}\\
    & = & \inf\{\|y - x\|_X \mid x \in X\cap M\}.
    \end{array}
\]

\end{lemma}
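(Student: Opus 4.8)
The plan is to exploit the fact that $X$ is discrete and use elementarity to transfer a uniform discreteness constant from $A$ inside $M$ to the quotient. First I would note that since $A$ is a discrete subgroup of the normed space $X$ and both $A$ and $X$ lie in $M \prec \Htheta$, the statement ``$A$ is discrete in $X$'' is expressible in $\Htheta$ with parameters $A, X$; more useful is the quantitative version: ``there exists $\varepsilon > 0$ such that for all $a \in A$ with $a \neq 0$ we have $\|a\|_X \geqslant \varepsilon$.'' This is a first-order statement over $\Htheta$ with parameters in $M$, so by elementarity there is such an $\varepsilon$ lying in $M$. (In fact one can take $\varepsilon \in M \cap \mathbb{R}$, but we only need its existence.)

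**The quotient norm identity.** Before estimating, I would dispatch the displayed equality of the two infima. The inclusion $X \cap M \subseteq \overline{X \cap M}$ gives ``$\geqslant$'' trivially (the infimum over the larger set is no larger). For ``$\leqslant$'', given $x \in \overline{X \cap M}$ and $\delta > 0$, pick $x' \in X \cap M$ with $\|x - x'\|_X < \delta$; then $\|y - x'\|_X \leqslant \|y - x\|_X + \delta$, so the infimum over $X \cap M$ is at most the infimum over $\overline{X \cap M}$. Hence the two are equal, and the quotient norm of $y + \overline{X \cap M}$ is indeed computed by distances to $X \cap M$.

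**Discreteness of the quotient subgroup.** Now I would show $A / (A \cap M)$ is discrete in $X / \overline{X \cap M}$. It suffices to show that every nonzero coset $a + \overline{X\cap M}$ with $a \in A$ has norm bounded below by a fixed positive constant; the natural candidate is the same $\varepsilon$ (or a fixed fraction of it). Take $a \in A$ with $a \notin \overline{X \cap M}$; I must bound $\inf\{\|a - x\|_X \mid x \in X \cap M\}$ away from $0$. The key observation: if $x \in X \cap M$, we would like to replace $x$ by an element of $A \cap M$ so as to invoke the constant $\varepsilon$ on the difference $a - (\text{that element}) \in A$. The right move is to first argue that we only need to consider $x \in A \cap M$: indeed, since $a \notin A \cap M$ would need handling, and more carefully, for $x \in X \cap M$ that is \emph{not} within $\varepsilon/2$ of $A$, the quantity $\|a - x\|_X$ is automatically $\geqslant \varepsilon/2$ because $a \in A$; whereas if $x \in X \cap M$ \emph{is} within $\varepsilon/2$ of some $a' \in A$, then — and here is the crux — by elementarity of $M$ (the statement ``there exists $a' \in A$ with $\|x - a'\|_X < \varepsilon/2$'' has parameters $x, A, \varepsilon$ all in $M$) we may choose such an $a' \in A \cap M$, whence $a - a' \in A$ is nonzero (as $a \notin A \cap M \supseteq \{a'\}$ forces $a \neq a'$) and $\|a - x\|_X \geqslant \|a - a'\|_X - \|a' - x\|_X \geqslant \varepsilon - \varepsilon/2 = \varepsilon/2$. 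Taking the infimum over $x \in X \cap M$ gives $\|a + \overline{X \cap M}\| \geqslant \varepsilon/2 > 0$, so $A/(A\cap M)$ is discrete.

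**Main obstacle.** The one subtle point — and the step I would treat most carefully — is the reduction from arbitrary $x \in X \cap M$ to $a' \in A \cap M$ via elementarity: one must phrase the existential statement ``$\exists a' \in A\ \|x - a'\|_X < \varepsilon/2$'' with all parameters genuinely in $M$ (which holds since $x \in M$, $A \in M$, and $\varepsilon \in M$) so that the witness can be pulled into $M \cap A$. One should also verify that $a + \overline{X \cap M} \neq 0$ in the quotient is equivalent to $a \notin \overline{X \cap M}$, and that $a \in A \setminus (A \cap M)$ combined with $a' \in A \cap M$ genuinely yields $a - a' \neq 0$; both are immediate, but worth stating. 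Everything else is routine normed-space and elementary-submodel bookkeeping.
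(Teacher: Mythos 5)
Your proof is correct and follows essentially the same route as the paper: the crux in both is to use elementarity to replace a witness $x \in X \cap M$ close to $a$ by some $a' \in A \cap M$ close to $x$, and then invoke the discreteness constant of $A$ on the difference $a - a'$. The only differences are cosmetic --- you argue directly that nontrivial cosets have norm at least $\varepsilon/2$ (with a case split on whether $x$ is near $A$), whereas the paper proves the contrapositive (coset norm below $\delta/3$ forces $a \in M$) and takes $\delta$ rational so that it lies in $M$ automatically.
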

\begin{proof}
    It is evident that $A\slash A\cap M$ is a subgroup of $X\slash \overline{X\cap M}$, so it is sufficient to show that it is discrete. Let $\delta$ be a positive, rational number, strictly below $\inf\{\|a\|_X \mid a\in A \setminus \{0\}\}$. The proof will be concluded if we show that for any $a \in A$, the inequality
    \[
        \|a +\overline{X\cap M}\|_{X \slash \overline{X\cap M}}<\delta\slash 3
    \]
    implies that $a \in M$. 

    Indeed, fix a non-zero element $a \in A$, and suppose that $\|a +\overline{X\cap M}\|_{X \slash \overline{X\cap M}}<\delta\slash 3$. Let $x\in X \cap M$ be an element witnessing it. By elementarity, we may find $a' \in A\cap M$, such that $\|x-a'\|_X<\delta \slash 3$.
    It follows that 
    \[
        \|a - a'\|_X \leqslant \| a-x\|_X + \|x-a'\|_X \leqslant \frac{2}{3}\cdot \delta,
    \]
    so $a=a' \in A\cap M$.
\end{proof}
We are now ready to prove Theorem~\ref{mainthm}.
\begin{proof}[Proof of Theorem \ref{mainthm}]
    The proof is by induction on $|A|$.
    \begin{itemize}
        \item $|A| \leqslant \aleph_0$. This case is well-known (\cite[Theorem 1.1]{ancel1994closed}).
        \item $|A|=\kappa^+$. Suppose that the Theorem holds for all groups $A$ of size at most $\kappa$ and infinite $\kappa$. Let $\theta$ be a cardinal from Lemma \ref{mainlemma}. By Proposition \ref{contchains}, we can fix a continuous chain $\{M_\alpha \mid \alpha< \kappa^+\}$ of elementary submodels of $\Htheta$, satisfying for all $\alpha<\kappa^+:$
        \begin{itemize}
            \item $|M_\alpha|\le\kappa$,
            \item $\alpha \subseteq M_\alpha.$
        \end{itemize}

        Given any $\alpha<\kappa^+$, by Lemma \ref{mainlemma} the quotient 
        \[
            A\slash (A\cap M_\alpha) \leqslant X \slash \overline{X\cap M_\alpha}
        \]
        is discrete. By the inductive hypothesis, $A\slash (A\cap M_\alpha)$ is $\kappa^+$-free. As the set $\{M_\alpha \cap \kappa^+ \mid \alpha<\kappa^+\}$ is a club, by Lemma \ref{clublemma} it follows that $A$ is free.
        \item $|A|$ singular. This case is handled at once by Theorem \ref{singularcomp}.
        \item $|A|=\kappa$ is weakly inaccessible, that is, $\lambda < \kappa$ implies $\lambda^+ < \kappa$. The proof is very much like in the successor case. Let $\theta$ be a cardinal from Lemma \ref{mainlemma}. Applying Proposition \ref{contchains}, we fix a continuous chain $\{M_\alpha \mid \alpha< \kappa\}$ of elementary submodels of $\Htheta$, satisfying for all $\alpha<\kappa:$
        \begin{itemize}
            \item $|M_\alpha|=|\alpha|+\aleph_0$,
            \item $\alpha \subseteq M_\alpha.$
        \end{itemize}
        By Lemma \ref{mainlemma}, for any $\alpha<\kappa$ the quotient 
        \[
            A\slash (A\cap M_\alpha) \leqslant X \slash \overline{X\cap M_\alpha}
        \]
        is discrete. Consequently, by the inductive hypothesis, every such quotient is $\kappa$-free. Since the set $\{M_\alpha \cap \kappa \mid \alpha<\kappa\}$ is a club, the group $A$ is free.
    \end{itemize}
\end{proof}
\subsection*{Acknowledgements} We are indebted to Piotr Niemiec (Jagiellonian University) for communicating the question behind the main result of the paper. We are also grateful to James E. Hanson (University of Maryland) for explaining the absoluteness aspects of freeness and Whiteheadness that convinced us to abandon our strategies towards a negative solution to the problem outside ZFC.

\label{sec:annex-1:-list}
  \bibliographystyle{plain}%
  \bibliography{bibliography}

\end{document}